\newtheorem{thm}{Theorem}[section]
\newtheorem{cor}[thm]{Corollary}
\newtheorem{lem}[thm]{Lemma}
\begin{document}
\title{On the Form of Odd Perfect Gaussian Integers}
\author{Matthew Ward}
\maketitle

\footnotetext{Research for this paper was completed at the Summer 2007
Research Experience for Undergraduates at Auburn University, under the
auspices of NSF grant no. 03533723.}

\section{Introduction}

Let $\mathbb{Z}[i]=\{a+bi : a,b\in\mathbb{Z}\}$ be the ring of Gaussian integers. All Gaussian integers will be represented by Greek letters and rational integers by ordinary Latin letters. Primes will be denoted by $\pi$ and $p$ respectively. Units will be denoted by $\varepsilon=\pm 1, \pm i$ and $1$ respectively. In $1961$ Robert Spira \cite{spira} defined the sum-of-divisors function on $\mathbb{Z}[i]$ as follows. Let $\eta=\varepsilon \Pi \pi_i^{k_i}$ be a Gaussian integer. This representation is unique in that we will choose our unit $\varepsilon$ such that each $\pi_i$ is in the first quadrant ($\text{Re} (\pi_i)> 0$ and $\text{Im} (\pi_i)\geq 0$). Spira defined the sum-of-divisors function $\sigma$ as

\[
\sigma(\eta)=\prod \frac{\pi_i^{k_i+1}-1}{\pi_i-1}
\]

This can be seen to match precisely with the rational form of the sum-of-divisors exactly when the rational primes coincide with the gaussian primes (when $p\equiv 3 \mod 4$).

A Gaussian integer $\eta$ is considered \textit{even} if and only if $1+i$ divides $\eta$. It is easy to see that $1 + i$ divides $a + bi$ if and only if $a$ and $b$
have the same parity, i.e., $a \equiv b \mod 2$.  It follows that the usual
parity rules for addition for $\mathbb{Z}$ hold in $\mathbb{Z}[i]$:  The sum of two Gaussian integers of the same parity is even, and of opposite parity, odd.  Also, since $a \equiv a^2 \mod 2$ for any rational integer a, it follows that a Gaussian integer $a + bi$ is even if and only if its norm, $N(a + bi) = a^2 + b^2$, is an even rational integer.

Finally, we can define \textit{perfect numbers} in the natural way. A Gaussian integer $\eta$ is \textit{perfect} if $\sigma (\eta) = (1+i)\eta$. This is not the only notion we can work with, though. The Gaussian integer $\eta$ is \textit{norm-perfect} if $N\left( \sigma(\eta) \right)=N(1+i)N\left(\eta\right)=2 N\left(\eta\right)$. Every perfect number is norm-perfect, so often it is easier to work with the norm perfect concept. Wayne McDaniel \cite{mcdaniel} proved a theorem for Gaussian integers analogous to Euclid's and Euler's characterization of even perfect positive integers. We will prove a theorem for Gaussian integers analogous to Euler's partial characterization of odd perfect positive integers, which is that any such integer must be of the form $p^jm^2$ where $\gcd(p,m) = 1$ and $p \equiv j \equiv 1 \mod 4$.

\section{The Form of Odd Perfect Numbers}

\begin{lem}\label{lemma}
If $\pi$ is an odd prime, then $\sigma (\pi^m)$ is even if and only if $m$ is odd.
\end{lem}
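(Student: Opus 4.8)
The plan is to work directly from Spira's formula by rewriting the geometric-series quotient as an honest sum. Since
$\sigma(\pi^m)=\frac{\pi^{m+1}-1}{\pi-1}=1+\pi+\pi^2+\cdots+\pi^m$,
the value $\sigma(\pi^m)$ is a sum of exactly $m+1$ terms, and the whole question reduces to controlling the parity of this sum.

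First I would show that every summand $\pi^k$ is odd. Because $\pi$ is an odd prime, $1+i$ does not divide $\pi$, which by the norm criterion from the introduction is equivalent to $N(\pi)$ being an odd rational integer. Since the norm is multiplicative, $N(\pi^k)=N(\pi)^k$ is odd for every $k\geq 0$, so each power $\pi^k$ is again odd.

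Next I would invoke the parity rules for addition recorded in the introduction: the sum of two Gaussian integers of equal parity is even, and of opposite parity is odd. A short induction on $n$ then shows that a sum of $n$ odd Gaussian integers is even precisely when $n$ is even. Applying this with $n=m+1$ gives that $\sigma(\pi^m)=\sum_{k=0}^m \pi^k$ is even if and only if $m+1$ is even, i.e. if and only if $m$ is odd, which is exactly the claim.

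I do not anticipate a serious obstacle. The one point needing care is to justify that the formal quotient $\frac{\pi^{m+1}-1}{\pi-1}$ genuinely equals the polynomial sum $1+\pi+\cdots+\pi^m$ inside $\mathbb{Z}[i]$, so that the parity argument is being applied to an actual Gaussian integer rather than to a fraction; but this is just the standard finite geometric series identity, valid in any commutative ring. Once the series form is in hand, the remainder is bookkeeping with the two parity rules for addition.
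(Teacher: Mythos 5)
Your proof is correct and takes essentially the same route as the paper: expand $\sigma(\pi^m)$ as the geometric sum $1+\pi+\cdots+\pi^m$, observe that every term is odd, and conclude the sum is even exactly when the number of terms, $m+1$, is even. The only cosmetic difference is that the paper packages the counting step as a congruence of norms, $N\left(\sum_j \eta_j\right) \equiv \sum_j N(\eta_j) \bmod 2$, whereas you argue directly with Gaussian parity and an induction; the underlying argument is identical.
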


\begin{proof}
We first claim that for any Gaussian integers $\eta_j$ we have $\displaystyle N\left(\sum_j \eta_j \right) \equiv \sum_j N(\eta_j) \mod 2$. This follows from basic parity rules. We have $\displaystyle N\left(\sum_j \eta_j \right) \equiv 0 \mod 2$ if and only if $\displaystyle \sum_j \eta_j$ is even in the Gaussian sense, but this happens if and only if there are an even number of odd $\eta_j$'s. This means that there are an even number of $N(\eta_j)$ $\equiv 1\mod 2$. Therefore, we have that $N\left( \displaystyle \sum_j \eta_j \right) \equiv 0 \mod 2$ if and only if $\displaystyle \sum_j N(\eta_j )\equiv 0\mod 2$.

Suppose $\pi$ is an odd prime. Now examine $\displaystyle N\left(\sigma(\pi^m) \right)=N\left(1+\pi + \cdots + \pi^m \right) \equiv N(1) + N(\pi) + \cdots + N(\pi^m) \mod 2$. We clearly have that this sum is congruent to $0 \mod 2$ if and only if $m$ is odd since each term in the sum is odd.
\end{proof}

\begin{thm}
If $\alpha$ is an odd norm-perfect Gaussian integer, then $\alpha = \pi^k \gamma^2$ where $k$ is an odd rational integer and $\gcd(\pi, \gamma)=1$.
\end{thm}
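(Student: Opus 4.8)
The plan is to work entirely with norms, since the hypothesis is stated in terms of norm-perfection, and to reduce the structural claim to a parity count on the prime factorization of $\alpha$. The starting observation is that an odd $\alpha$ has odd norm: by the parity discussion in the introduction, $\alpha$ is odd precisely when $N(\alpha)=a^2+b^2$ is odd. Hence the norm-perfect equation gives $N(\sigma(\alpha))=2N(\alpha)$ with $N(\alpha)$ odd, so the rational integer $N(\sigma(\alpha))$ satisfies $N(\sigma(\alpha))\equiv 2 \pmod 4$; that is, it is divisible by $2$ but not by $4$. This single divisibility fact will carry the whole argument.

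Next I would exploit multiplicativity. Writing $\alpha=\varepsilon\prod_i \pi_i^{k_i}$ in the canonical first-quadrant form, Spira's $\sigma$ is defined as a product purely over the prime powers (the unit never enters), and $N$ is completely multiplicative, so
\[
N(\sigma(\alpha)) = \prod_i N\!\left(\sigma(\pi_i^{k_i})\right).
\]
Now Lemma~\ref{lemma} tells me that $\sigma(\pi_i^{k_i})$ is even in $\mathbb{Z}[i]$ exactly when $k_i$ is odd, and by the norm-parity equivalence from the introduction this is the same as saying the rational integer $N(\sigma(\pi_i^{k_i}))$ is even exactly when $k_i$ is odd. Thus each factor on the right contributes a factor of $2$ precisely for the odd exponents.

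The counting step then settles the form. Since $N(\sigma(\alpha))$ is divisible by $2$, at least one factor must be even, so at least one $k_i$ is odd. If two or more of the $k_i$ were odd, then two or more factors would be even and the product would be divisible by $4$, contradicting $N(\sigma(\alpha))\equiv 2\pmod 4$. Hence exactly one exponent, say $k=k_{i_0}$, is odd and every other $k_i$ is even. Setting $\pi=\pi_{i_0}$ and $\gamma=\prod_{i\neq i_0}\pi_i^{k_i/2}$ gives $\prod_{i\neq i_0}\pi_i^{k_i}=\gamma^2$ with $\pi\nmid\gamma$, so $\gcd(\pi,\gamma)=1$.

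The one place I expect real friction is the leftover unit $\varepsilon$: the work above produces $\alpha=\varepsilon\,\pi^k\gamma^2$, and I must absorb $\varepsilon$ to reach the exact form $\pi^k\gamma^2$. Here the oddness of $k$ is exactly what rescues me. Replacing $\pi$ by an associate $\varepsilon'\pi$ multiplies $\pi^k$ by the unit $(\varepsilon')^{k}$, and as $\varepsilon'$ ranges over $\{1,i,-1,-i\}$ with $k$ odd the quantity $(\varepsilon')^{k}$ ranges over all four units (for instance, $(\varepsilon')^{k}$ sweeps out $\{1,i,-1,-i\}$ when $k\equiv 1$ and $\{1,-i,-1,i\}$ when $k\equiv 3 \pmod 4$). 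Choosing the associate of $\pi$ for which $(\varepsilon')^{k}$ cancels $\varepsilon$ — note this preserves $\gcd(\pi,\gamma)=1$, since associates share the same prime ideal — yields $\alpha=\pi^k\gamma^2$ as claimed.
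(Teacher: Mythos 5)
Your proposal is correct and takes essentially the same approach as the paper: norm-perfection forces $N(\sigma(\alpha)) \equiv 2 \pmod 4$, and then multiplicativity of the norm together with Lemma~\ref{lemma} forces exactly one exponent $k_i$ to be odd, giving the form $\pi^k\gamma^2$. Your final step absorbing the unit $\varepsilon$ into an associate of $\pi$ (possible precisely because $k$ is odd, since squares of units only give $\pm 1$) is a detail the paper's proof silently glosses over, and you handle it correctly.
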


\begin{proof}
Let $\alpha$ be an odd norm-perfect Gaussian integer. Then $\displaystyle\alpha=\prod_{i=1}^n \pi_i^{k_i}$, where no $\pi_j$ is associate to $(1+i)$. Since $\alpha$ is norm-perfect we have $N\left( \sigma(\alpha)\right) = 2 N(\alpha )$, but since $N(\alpha)$ is odd and the sum of two squares we have that $N(\alpha)\equiv 1\mod 4$, so $N\left(\sigma(\alpha)\right)\equiv 2 N(\alpha) \equiv 2\mod 4$.

Examine \begin{eqnarray*}
N\left( \sigma(\alpha) \right) & = & \prod_{i=1}^n N\left(\sigma\left(\pi_i^{k_i}\right)\right) \\
										& = & N\left(\sigma\left(\pi_1^{k_1}\right)\right) \cdots N\left(\sigma\left(\pi_n^{k_n}\right)\right) \\
\end{eqnarray*}

Without loss of generality we can suppose that $N\left(\sigma\left(\pi_1^{k_1}\right)\right) \equiv 2\mod 4$ and all the other terms in the product above are congruent to $1 \mod 4$. This is because no term can be $0\mod 4$ or else the whole product would be $0\mod 4$, and no term can be $3\mod 4$ since the norm is the sum of two squares. From Lemma \ref{lemma} we know that $k_1$ is odd and that each $k_i$ is even for all $1<i\leq n$. 

The form of an odd norm-perfect Gaussian integer then must have one prime to an odd power and the rest of the factorization are squares. Thus the form is $\alpha=\pi^k \gamma^2$ where $k$ is odd and $\gcd(\pi, \gamma)=1$.
\end{proof}

\begin{cor}
If $\alpha$ is an odd perfect Gaussian integer, then $\alpha = \pi^k \gamma^2$ where $k$ is an odd rational integer and $\gcd(\pi, \gamma)=1$.
\end{cor}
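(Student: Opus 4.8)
The plan is to reduce this corollary to the Theorem by showing that the perfect condition implies the norm-perfect condition; once that containment is recorded, the desired form follows with no further work. The introduction already asserts that every perfect Gaussian integer is norm-perfect, so the only substantive step is to verify this implication explicitly and then invoke the Theorem on the resulting odd norm-perfect integer.

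First I would suppose $\alpha$ is an odd perfect Gaussian integer, so that by definition $\sigma(\alpha) = (1+i)\alpha$. Taking norms of both sides and using the multiplicativity of the norm on $\mathbb{Z}[i]$, I obtain $N(\sigma(\alpha)) = N\bigl((1+i)\alpha\bigr) = N(1+i)\,N(\alpha) = 2N(\alpha)$. This is precisely the defining equation for $\alpha$ to be norm-perfect, so every perfect $\alpha$ is norm-perfect. Since $\alpha$ is moreover odd by hypothesis, it is an odd norm-perfect Gaussian integer, and the Theorem applies verbatim to give $\alpha = \pi^k \gamma^2$ with $k$ an odd rational integer and $\gcd(\pi,\gamma)=1$.

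I expect essentially no obstacle here: the entire argument hinges on the single norm identity $N\bigl((1+i)\alpha\bigr) = 2N(\alpha)$, which is immediate from $N(1+i)=2$ together with multiplicativity. The corollary is therefore a direct specialization of the Theorem, and the only care needed is to confirm that the oddness of $\alpha$ is preserved (it is, since it is assumed outright) so that the Theorem's hypotheses are genuinely met.
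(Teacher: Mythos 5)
Your proposal is correct and follows exactly the paper's route: the paper's proof is the one-line observation that every perfect Gaussian integer is norm-perfect, after which the Theorem applies. You simply make explicit the norm computation $N(\sigma(\alpha)) = N(1+i)N(\alpha) = 2N(\alpha)$ that the paper leaves implicit (it is stated without proof in the introduction), which is a fine elaboration of the same argument.
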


\begin{proof}
Every perfect Gaussian integer is norm-perfect.
\end{proof}

\section{Some Pesky Counterexamples}

As noted above, in the rational case, Euler proved that every odd perfect number has the form $p^jb^2$ where $p$ is a prime, $\gcd(p,b)=1$, $p\equiv 1\mod 4$, and $j\equiv 1\mod 4$. In our form, $\pi_1^k\gamma^2$,  we get that $N( \pi_1) \equiv 1\mod 4$ for free since the norm is a sum of two squares. However, we cannot show that $k \equiv 1\mod 4$. 

It is still unknown whether or not there are any odd perfect Gaussian integers, but there are odd norm-perfect numbers. In fact the smallest is a rather disturbing example: $2+i$. It turns out that this is not only odd, but a prime. Have no fear, we shall show that this is the only such example. Let $\pi$ be a Gaussian prime. If $\pi$ is norm-perfect then $N(\pi + 1)= 2N(\pi)$; if $\pi=a+bi$ then this equation becomes

\[
(a+1)^2+b^2=2(a^2+b^2)
\]
which is equivalent to $(a-1)^2+b^2=2$. The only $4$ integer solutions to this are $(2,-1)$, $(0,-1)$, $(2,1)$, $(0,1)$. Therefore in the Gaussian integers $2+ i$ and $2 - i$ are the only primes that are norm-perfect.

\section{Acknowledgments}

I would like to thank the NSF and Auburn University for providing the REU and facilities for this research. I would like to acknowledge Pete Johnson of Auburn University for advising me and providing comments on this paper. I also acknowledge Jeff Ward for introducing me to the sum-of-divisors and allowing me to bounce ideas off of him all summer.

\nocite{muskat}
\bibliography{oddperfect}
\bibliographystyle{plain}

\end{document}